\let\polishlcross=\l
\def\l{\ifmmode\ell\else\polishlcross\fi}
\let\emptyset=\varnothing
\let\theta=\vartheta
\let\rho=\varrho
\let\phi=\varphi
\def\NN{\mathbb N}
\newcommand{\script}{\mathcal}
\newcommand{\parentheses}[1]{{\left( {#1} \right)}}
\newcommand{\p}{\parentheses}
\newcommand{\Set}[1]{{\left\lbrace {#1} \right\rbrace}}
\def\set#1:#2{\Set{{#1} \colon {#2}}}
\newcommand{\col}[1]{\operatorname{col}\p{ {#1}}}
\DeclareFontFamily{U}  {MnSymbolC}{}
\DeclareSymbolFont{MnSyC}         {U}  {MnSymbolC}{m}{n}
\DeclareFontShape{U}{MnSymbolC}{m}{n}{
    <-6>  MnSymbolC5
   <6-7>  MnSymbolC6
   <7-8>  MnSymbolC7
   <8-9>  MnSymbolC8
   <9-10> MnSymbolC9
  <10-12> MnSymbolC10
  <12->   MnSymbolC12}{}
\DeclareMathSymbol{\powerset}{\mathord}{MnSyC}{180}
\theoremstyle{plain}
\newtheorem{thm}{Theorem}[section]
\newtheorem{prop}[thm]{Proposition}
\newtheorem{cor}[thm]{Corollary}
\theoremstyle{definition}
\title{A Cantor-Bernstein-type theorem for spanning trees in infinite graphs}
\author{Joshua Erde}
\author{Pascal Gollin}
\author{Atilla Jo\'{o}}
\thanks{The third author acknowledges support by the Alexander von Humboldt Foundation and partially by OTKA 129211.}
\author{Paul Knappe}
\author{Max Pitz}
\address{University of Hamburg, Department of Mathematics, Bundesstra{\ss}e 55 (Geomatikum), 20146 Hamburg, Germany}
\email{joshua.erde@uni-hamburg.de}
\email{pascal.gollin@uni-hamburg.de}
\email{attila.joo@uni-hamburg.de}
\email{paul.knappe@studium.uni-hamburg.de}
\email{max.pitz@uni-hamburg.de}
\keywords{spanning trees, colouring number, packing, covering}
\subjclass[2010]{05C63, 05C40} 
\begin{document}

\begin{abstract}
We show that if a graph admits a packing and a covering both consisting of $\lambda$ many spanning trees, where $\lambda$ is some infinite cardinal, then the graph also admits a decomposition into $\lambda$ many spanning trees. For finite $\lambda$ the analogous question remains open, however, a slightly weaker statement is proved.
\end{abstract}

\maketitle


\section{Introduction}\label{sec:introduction}
The graphs in this paper may have parallel edges but not loops. 
A \emph{spanning tree} of a graph~$G$ is a connected, acyclic subgraph $T \subseteq G$ containing all vertices of~$G$. 
Given a cardinal~$\lambda$, a \emph{$\lambda$-packing (of~$G$)} is a collection of $\lambda$ many edge-disjoint spanning trees in~$G$, a \emph{$\lambda$-covering (of~$G$)} is a collection of~$\lambda$ many spanning trees whose union covers the edge set of~$G$, and a \emph{$\lambda$-decomposition (of~$G$)} is a collection of~$\lambda$ many spanning trees whose edge sets partition the edge set of~$G$.

The purpose of this note is to establish the following Cantor-Bernstein-type theorem for decomposing infinite graphs into spanning trees:

\begin{thm}\label{thm:characterisation}
Let $\lambda$ be an infinite cardinal. Then a graph admits a $\lambda$-decomposition if and only if it admits both a $\lambda$-packing and a $\lambda$-covering.
\end{thm}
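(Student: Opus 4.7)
The forward implication is immediate: any $\lambda$-decomposition is in particular a $\lambda$-packing and a $\lambda$-covering. For the converse, fix a $\lambda$-packing $(P_\alpha)_{\alpha<\lambda}$ and a $\lambda$-covering $(C_\alpha)_{\alpha<\lambda}$ of $G$. The plan is to construct the desired $\lambda$-decomposition $(T_\alpha)_{\alpha<\lambda}$ by a transfinite recursion of length $\lambda$, drawing each $T_\alpha$ from the packing and using fundamental cycle exchanges to absorb those edges that the packing misses. The covering contributes the crucial cardinality bound $|E(G)| \le \lambda \cdot |V(G)|$ and, more importantly, an explicit witness assigning each edge to one of $\lambda$ spanning trees, which will drive the bookkeeping.

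At each stage $\alpha < \lambda$ I would maintain: (i) the spanning trees $(T_\beta)_{\beta<\alpha}$ constructed so far are pairwise edge-disjoint; and (ii) writing $J_\alpha\subseteq\lambda$ for the set of already ``consumed'' packing indices, the family $\{P_\gamma : \gamma \in \lambda\setminus J_\alpha\}$ remains a $\lambda$-packing of the residual graph $G - \bigcup_{\beta<\alpha} E(T_\beta)$. At each successor step I pick an uncovered target edge $e$ (taken from a carefully maintained queue) together with some index $\gamma \in \lambda\setminus J_\alpha$, and define $T_\alpha$ to be $P_\gamma$ itself if $e\in P_\gamma$, or else the spanning tree obtained from $P_\gamma$ by a single fundamental cycle exchange that absorbs $e$ at the cost of some edge $f \in P_\gamma$. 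Crucially, the other trees $P_{\gamma'}$ remaining in the packing are never modified, which is what makes invariant (ii) stable through limits: at a limit $\delta$ the set $\{P_\gamma : \gamma\in\lambda\setminus J_\delta\}$ still has size $\lambda$, since $|J_\delta|\le|\delta|<\lambda$.

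I expect the main obstacle to be the \emph{coverage bookkeeping}. Every exchange kicks out an edge $f$ that was previously in the packing but is now uncovered, creating a new ``debt'' that must be discharged at some later stage --- and those later stages may themselves generate further debts, risking a cascade that never terminates within the $\lambda$ available stages. The $\lambda$-covering enters here in an essential way: by preassigning to each stage $\alpha$ a pool of target edges via the covering (for instance, those edges first witnessed by $C_\alpha$), one gains enough control to close all cascades within the recursion and guarantee that every edge ends up in exactly one $T_\alpha$; the case $|V(G)| > \lambda$, where each stage must already cover $|V(G)|$ edges ``in bulk'', will need particular care here. This is the Cantor-Bernstein flavour of the argument: just as the classical proof partitions a set by tracing orbits under two interlocking injections, here one traces each edge's orbit under the packing/covering swap dynamics until it is definitively assigned to a unique spanning tree. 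Verifying the termination of these orbits and confirming that the resulting family is indeed an edge-partition of $G$ into spanning trees will form the technical heart of the proof.
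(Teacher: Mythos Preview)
Your forward direction is fine, but the converse plan has an arithmetic gap, not merely a bookkeeping wrinkle. The recursion has length $\lambda$ and each step produces one $T_\alpha$ from a packing tree by \emph{a single} exchange; hence $\bigcup_\alpha E(T_\alpha)$ agrees with the edge set of the consumed packing trees up to a symmetric difference of size at most $\lambda$. But the set of edges missed by the packing can have size $|V(G)|$, which may far exceed $\lambda$: take any graph with $|V(G)|>\lambda$ admitting a $\lambda$-decomposition, duplicate every edge of one tree, and use the original trees as the packing; the new graph still has a $\lambda$-covering, yet $|V(G)|>\lambda$ edges lie outside the packing. No scheduling of debts repairs this --- you simply do not perform enough exchanges. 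Allowing many exchanges per tree does not obviously help either, since once $T_\alpha$ is finalised you cannot revisit it, while every exchange ejects a fresh edge that must later be rehoused.

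The paper proceeds differently. The $\lambda$-covering is used only once, via the Erd\H{o}s--Hajnal theorem, to deduce $\col{G}\le\lambda^+$; the covering then plays no further role. A good well-order of $V(G)$ induces a well-order $\prec$ on $E(G)$ in which the backward edges at each vertex have order type at most $\lambda$. The recursion runs over \emph{all} edges, maintains the whole $\lambda$-packing throughout, and never freezes any individual tree before the end. When the current edge $e_t$ is uncovered, the colouring bound guarantees a tree $T_k$ in the current packing containing none of the (fewer than $\lambda$) backward edges at the relevant vertex that precede $e_t$; exchanging $e_t$ into $T_k$ ejects the $\prec$-maximal edge $f$ of the fundamental cycle, and one checks $f\succ e_t$. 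Every debt is thus pushed strictly forward in $\prec$, which is exactly what controls the cascades and the limit stages (the latter via a bond argument: in each tree the $\prec$-least edge across each bond eventually stabilises).
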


Perhaps interestingly, the $\lambda$ in Theorem \ref{thm:characterisation} does not need to be unique: For example, it is not hard to show directly that $K_{\aleph_1}$, the complete graph on $\aleph_1$ vertices, admits decompositions both into $\aleph_0$ or $\aleph_1$ many spanning trees. This effect can get arbitrarily pronounced, see Proposition~\ref{prop_example} below.

Our proof of Theorem~\ref{thm:characterisation} relies on two  well-known characterisations of when $G$ admits a $\lambda$-packing or $\lambda$-covering for an infinite cardinal $\lambda$. Firstly, for $\lambda$-packings, we have the following characterisation in terms of the edge-connectivity of $G$.

  \begin{thm}[Laviolette, {\cite{laviolette2005decompositions}*{Corollary 14}}]
 \label{thm:laviolette}
 Let $\lambda$ be an infinite cardinal. Then a graph admits a $\lambda$-packing if and only if it has edge-connectivity at least $\lambda$.
\end{thm}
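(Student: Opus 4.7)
Necessity is immediate: given a $\lambda$-packing $\{T_i : i < \lambda\}$ and any edge set $F$ with $|F| < \lambda$, pigeonhole gives some $T_i$ disjoint from $F$, so $G - F$ contains the connected spanning subgraph $T_i$ and is therefore connected. Hence no edge cut has fewer than $\lambda$ edges.

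For the converse, my plan is to construct the $\lambda$ spanning trees by transfinite recursion. Fix a root $r \in V(G)$ and enumerate $V(G) \setminus \{r\} = \{v_\alpha : \alpha < \kappa\}$. Starting from the trivial subgraph $T_i^0$ on the single vertex $r$ for each $i < \lambda$, at stage $\alpha + 1$, whenever $v_\alpha \notin V(T_i^\alpha)$, I would attach $v_\alpha$ to $T_i^\alpha$ by a $v_\alpha$-$V(T_i^\alpha)$ path in $G$ that meets $V(T_i^\alpha)$ only at its endpoint and uses no previously chosen edge, taking unions at limit stages. Acyclicity and pairwise edge-disjointness of the $T_i^\alpha$ are preserved throughout, so the union $T_i := \bigcup_\alpha T_i^\alpha$ is a spanning tree.

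The main obstacle is ensuring that such a path exists at every stage. If the set $F$ of all previously chosen edges satisfies $|F| < \lambda$, then $G - F$ is connected and the required path is immediate; but naively $|F|$ can grow to $|\alpha| \cdot \lambda$ and exceed $\lambda$. To handle this I would invoke an elementary submodel argument: fix a continuous $\in$-chain $\langle M_\beta : \beta < \kappa \rangle$ of elementary submodels of some large $H(\theta)$, each of size $\lambda$, with $G$ and the enumeration in $M_0$ and $V(G) \subseteq \bigcup_{\beta} M_\beta$, and perform the recursion stratum-by-stratum on the vertices of $M_{\beta + 1} \setminus M_\beta$. Elementarity should then guarantee that at any moment within a stratum, the number of edges used so far is strictly less than the edge-connectivity between the vertices processed in that stratum and the earlier part of the construction, so that the required paths always exist. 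Verifying this preservation of edge-connectivity at the submodel level will be the genuinely delicate step.
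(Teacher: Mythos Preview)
Your necessity argument is correct, and for sufficiency you have correctly identified the central difficulty: the set of already-used edges can grow past size $\lambda$, so one cannot simply appeal to $\lambda$-edge-connectivity to find the next path. The paper, however, sidesteps this obstacle by a device much more elementary than elementary submodels. The key idea is to grow \emph{all} $\lambda$ trees on the \emph{same} vertex set $V_t$ throughout. At step $t$ one contracts $V_t$ to a single vertex, obtaining a graph $G_t$ that is still $\lambda$-edge-connected and in which \emph{no edges have yet been used}. In $G_t$ a straightforward greedy argument (at each substep fewer than $\lambda$ edges have been chosen so far, so the remaining graph is still connected) produces $\lambda$ pairwise edge-disjoint connected subgraphs $S_k$, each of size strictly less than $\lambda$, with nested vertex sets all containing the contracted vertex and $v_t$. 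Partitioning $\{S_k : k < \lambda\}$ into $\lambda$ blocks of size $\lambda$ and taking unions gives one connected extension per tree, and uncontracting extends every $T_i$ by the same new vertex set. The bookkeeping problem you describe simply never arises.

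Your proposed route via a chain of elementary submodels is in the spirit of Soukup's treatment of the stronger bond-faithful decomposition theorem and could perhaps be completed, but the step you yourself flag as ``genuinely delicate'' is indeed a gap rather than a routine detail. One concrete obstacle: to use elementarity to locate a $v_\alpha$--$r$ path in $G - F$ inside $M_{\beta+1}$, you would need $F \in M_{\beta+1}$; but $F$ is the set of edges chosen so far in the recursion, and there is no reason for it to be an \emph{element} of $M_{\beta+1}$ even when $F \subseteq M_{\beta+1}$ and $|F| < \lambda$. Moreover, allowing the trees $T_i^\alpha$ to sit on different vertex sets forces you to control, for each $i$ separately, the interface between $V(T_i^\alpha)$ and its complement, which multiplies the bookkeeping. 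The paper's contraction trick eliminates both issues at once.
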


The analogous statement for finite $\lambda$ fails dramatically: there are infinite graphs of arbitrarily large finite edge-connectivity which do not even contain two edge-disjoint spanning trees, see~\cite{aharoni1989infinite}.

Theorem~\ref{thm:laviolette} was originally obtained by Laviolette as corollary to his theory on ``bond-faithful decompositions'' which required the generalised continuum hypothesis (GCH). The use of GCH to obtain these bond-faithful decompositions was subsequently removed by Soukup \cite{soukup2011elementary}*{Theorem~6.3} using the technique of elementary submodels. In Section \ref{s:short}, we will give a short direct proof of Theorem \ref{thm:laviolette}, not relying on the  ``bond-faithful decomposition'' result.
 
The characterisation of the existence of $\lambda$-coverings relies on the following notion introduced by Erd\H{o}s and Hajnal \cite{erdHos1967decomposition}, which we adapt here slightly to take parallel edges into account: The \emph{colouring number} $\col{G}$ of a graph $G=(V,E)$ is the smallest cardinal $\mu$ such that there exists a well-ordering $<^*$ of $V$ such that for every $v\in V $ the cardinality of the set of edges between $v$ and $\set{w \in V}:{ w <^* v} $ is strictly less than $\mu$.  We call any well-ordering $<^*$ that witnesses the colouring number of a graph \emph{good}. The relation of the colouring number to $\lambda$-coverings is the following:

 \begin{thm}[Erd\H{o}s and Hajnal, {\cite{erdHos1967decomposition}*{Theorem 9}}]
 \label{thm:EH}
 \label{t:EHcover}
 Let $\lambda$ be an infinite cardinal. Then a graph admits a $\lambda$-covering if and only if it is connected and  has colouring number at most $\lambda^+$.
\end{thm}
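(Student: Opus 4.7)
The plan is to prove each direction separately.

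For the forward direction, suppose $G$ has a $\lambda$-covering $\{T_\alpha : \alpha < \lambda\}$. Connectedness is immediate since $G$ contains a spanning tree. To bound the colouring number, the first observation is that in a $\lambda$-covering any two vertices are connected by at most $\lambda$ parallel edges: each edge lies in some tree, and a tree contains at most one edge between any given pair. Therefore it suffices to produce a well-ordering of $V(G)$ in which every vertex has at most $\lambda$ \emph{back-neighbours}. I would achieve this via a suitable chain of elementary submodels of $H(\theta)$ (for $\theta$ large) of cardinality $\lambda$ containing $G$ and the covering, ordering vertices by the first submodel in which they appear: the back-neighbours of any vertex are then confined to a single $\lambda$-sized submodel.

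For the backward direction, assume $G$ is connected with $\col{G} \leq \lambda^+$ and fix a good well-ordering $<'$ of $V(G)$. The central step is to refine $<'$ to a good well-ordering $<^*$ in which every non-minimal vertex has at least one back-edge. I would define $<^*$ by transfinite recursion: at each stage $\alpha$, with $W_\alpha \subsetneq V(G)$ the already enumerated set, enumerate as $v_\alpha$ the $<'$-least element of $V(G) \setminus W_\alpha$ that has a neighbour in $W_\alpha$. Connectedness of $G$ guarantees that such a vertex exists at every stage until $V(G)$ is exhausted, so $<^*$ is well-defined and every non-first vertex of $<^*$ has a back-edge. Once $<^*$ is in place, the construction of the covering is routine: for each non-first $v$, enumerate its (non-empty, size $\leq \lambda$) back-edge set as $(e_v^\gamma)_{\gamma < \lambda}$ with repetition if needed, and set $T_\gamma := \{e_v^\gamma : v \text{ non-first}\}$. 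Each $T_\gamma$ is then a spanning tree---acyclic because every non-first vertex has exactly one designated back-edge, and connected by transfinite induction along $<^*$---and every edge of $G$, being a back-edge at its $<^*$-later endpoint, lies in some $T_\gamma$.

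The main technical obstacle is showing that $<^*$ is still good. For a vertex $v$, partition its $<^*$-back-neighbours into those that are $<'$-earlier than $v$ and those that are $<'$-later. The first set contributes at most $\lambda$ back-edges by goodness of $<'$. For the second set, I claim it contains at most one vertex: if distinct $u, u'$ both lay there with $u$ enumerated first in $<^*$, then at the moment $u'$ is enumerated the already-enumerated neighbour $u$ of $v$ would make $v$ itself a candidate, so the $<'$-minimality condition would force $u' <' v$, contradicting $u' >' v$. Combined with the fact that $<'$ being good bounds the number of parallel edges between any pair of vertices by $\lambda$, this contributes at most $\lambda$ further back-edges at $v$, so $<^*$ witnesses $\col{G} \leq \lambda^+$.
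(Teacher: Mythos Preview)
Your backward direction is correct and takes a slightly different tack from the paper. The paper simply rainbow-colours the back-edge sets $E_i$ via injections into $\lambda$, obtaining $\lambda$ many edge-disjoint spanning \emph{forests}, and then invokes connectedness of $G$ to extend each forest to a spanning tree. You instead first refine the good well-order so that every non-minimal vertex has at least one back-edge, and then your enumeration-with-repetition directly produces spanning trees. Both work; yours trades the ``extend forests to trees'' step for the short argument that the refinement $<^*$ remains good, and your ``at most one $<'$-later back-neighbour'' claim is correct.

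The forward direction, however, has a genuine gap. A strictly increasing chain of elementary submodels each of cardinality $\lambda$ has length at most $\lambda^+$ (once you reach stage $\lambda^+$ the union already has size $\lambda^+ > \lambda$), so such a chain cannot cover $V(G)$ when $|V(G)| > \lambda^+$. To salvage the approach you must let the $M_\alpha$ grow, keeping only $|M_{\alpha+1} \setminus M_\alpha| \leq \lambda$; but then for $v \in M_{\alpha+1} \setminus M_\alpha$ the back-neighbours lying in $M_\alpha$ are no longer automatically confined to a $\lambda$-sized set, and an additional argument is needed. The missing ingredient is precisely the tree structure of the covering: if $\lambda \subseteq M_\alpha$ and $M_\alpha \prec H(\theta)$, then $M_\alpha \cap V(G)$ is downward-closed in every rooted tree-order $\leq_j$, so any edge from $v \notin M_\alpha$ to some $w \in M_\alpha$ must be the parent-edge of $v$ in the tree $T_j$ containing it, giving at most $\lambda$ such edges. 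The paper carries out exactly this closure argument directly---defining an explicit closure operator under all the maps $x \mapsto \lceil x \rceil_j$---without invoking elementary submodels; your sketch would need the same step, either via elementarity or by hand.
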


The original proof of Theorem \ref{t:EHcover}, stated only for simple graphs, is quite oblique; it is reduced to a claim in an earlier paper by the same authors \cite{EH66}, the proof of which in turn is omitted, stating only that it follows from similar methods as a proof of Fodor, which itself is not entirely elementary. 

For this reason, we will also provide a short  proof of Theorem \ref{t:EHcover} in Section \ref{s:short}. Our proof has the additional feature that as a byproduct it yields that every graph has a good well-order of the shortest possible order type, $|V(G)|$. Previously this had to be deduced from Theorem~\ref{t:EHcover} together with a result of Erd\H os and Hajnal in \cite{EH66}*{Theorem 8.6}, or by employing the main theorem in \cite{bowler2015colouring} which characterises the colouring number of a simple graph in terms of forbidden subgraphs.

The structure of the paper is then as follows. In Section \ref{s:short} we provide short proofs of Theorems \ref{thm:laviolette} and \ref{t:EHcover}. In Section \ref{s:graphs} we prove Theorem \ref{thm:characterisation}, and finally in Section \ref{s:open} we discuss an open problem, namely whether Theorem \ref{thm:characterisation} also holds for finite $\lambda$.

\section{Elementary proofs of Laviolette and Erd{\H{o}}s-Hajnal}\label{s:short}
In this section, we provide elementary proofs of Theorems~\ref{thm:laviolette} and \ref{t:EHcover}. 

\begin{proof}[Proof of Theorem \ref{thm:laviolette}]
The forward implication is trivial. For the converse, consider a graph~$G$ of infinite edge-connectivity~$\lambda$. Let $V(G) = \set{v_j}:{j < \kappa}$. We will construct a family ${\script{T}=(T_i \colon i < \lambda)}$ of edge-disjoint spanning subgraphs (which will then  contain the desired trees) in $\kappa$ many steps as follows: for each $t < \kappa$, we find families $\script{T}_t=(T_i(t) \colon i < \lambda)$ of edge-disjoint connected subgraphs of $G$, all on the same vertex set $V_t \subset V$ which satisfies $\set{v_j}:{j < t} \subseteq V_t$. Moreover, we make sure that for every $i< \lambda$ we have $T_i(t) \subseteq T_i(t')$  whenever~${t < t'}$. Taking $T_i = \bigcup_{t < \kappa} T_i(t)$ yields the desired family $\script{T}$.

It remains to describe the construction. Initially we let $V_0 = \emptyset$. In a limit step we may simply take unions. At a successor step, suppose that in some step $t < \kappa$ the family $\script{T}_t$ is already defined. If $v_t \in V_t$, let $\script{T}_{t+1} = \script{T}_t$.
Otherwise, consider the graph~$G_t$ where we contract~$V_t$ to a single vertex~$x_t$ and delete all resulting loops. Since~$G$ has edge-connectivity~$\lambda$, so does~$G_t$. Hence, by greedily adding new paths, we can find a sequence $(S_k \colon k < \lambda)$ of edge-disjoint, connected subgraphs of $G_t$, all of size strictly less than $\lambda$, such that $x_t,v_t \in S_0$ and $V(S_k) \subseteq V(S_{k'})$ whenever $k < k'$. Let $V'_t := \bigcup_{k < \lambda} V(S_k)$. Next, partition $\lambda$ into $\lambda$ many subsets $(O_i \colon i < \lambda)$ each of cardinality $\lambda$, and define $H_i = \bigcup_{k \in O_i} S_k$, a
connected subgraph of~$G_t$ with vertex set~$V'_t$. If for each $i < \lambda$ we let $T_i(t+1)$ be the subgraph of $G$ with vertex set $V_{t+1} := V_t \cup (V'_t \setminus \Set{x_t})$ and edge set $E(T_i(t)) \cup E(H_i)$, then $\script{T}_{t+1}$ is as desired.
\end{proof}


\begin{proof}[Proof of Theorem \ref{t:EHcover}]
If the colouring number of $G$ is less than $\lambda^+$, then, following Erd\H{os}s and Hajnal, we can decompose $G$ into forests in the following manner: Let $(v_i \colon i < \kappa)$ be a good well-order of $V(G)$, i.e.\ one where for each $i$ the set $E_i$ of `backwards edges' from $v_i$ (edges between $v_i$ and some $v_j$ where $j<i$) has cardinality at most $\lambda$. For each $i < \kappa$ let us pick an arbitrary injection $f_i : E_i \rightarrow \lambda$ and for each $k < \lambda$ let $T_k = \bigcup_{i < \lambda} f_i^{-1}(k)$. In words, for each $i$ we pick an arbitrary rainbow colouring of $E_i$ with (at most)  $\lambda$ many colours, and then consider the monochromatic edge sets. Since $\bigcup_{i < \kappa} E_i = E(G)$, the family $(T_k \colon k < \lambda)$ covers all edges of~$G$. To see that each $T_k$ is a forest, note that every cycle $C$ in $G$ has a vertex $v_i \in V(C)$ of maximal index $i$. This, however, implies $|C \cap E_i| = 2$, and so $C \not\subseteq T_k$ for any $k$. Finally, since $G$ is connected, each forest can be extended to a spanning tree, and hence $G$ admits a $\lambda$-covering.

For the converse implication, suppose there exists a family of $\lambda$ many spanning trees $(T_i \colon i < \lambda)$ which covers $E(G)$. First we note that there are at most $\lambda$ many parallel edges between any two vertices of $G$, since at most one such edge is in each $T_i$. If $|E(G)|\leq \lambda$ then any well-ordering of $V(G)$ witnesses that $\col{G} \leq \lambda^+$. Hence we may assume that $|E(G)| > \lambda$ which, by the previous comment, implies $|V(G)| > \lambda$. Let us root each $T_i$ arbitrarily and let $\leq_i$ be the corresponding tree order on $V(G)$, cf.~\cite{D16}*{\S1.5}. For a vertex $x$, recall that $\lceil x \rceil_i = \set{v}:{v \leq_i x}$ denotes the vertex set of the path from the root to $x$ in $T_i$. Consider the following closure operation of a given vertex set $X\subseteq V(G)$: 
Let $X_0 = X$ and for each $n \in \NN$ put $X_{n+1} :=
\bigcup \set{\lceil x \rceil_i}:{x \in X_n, \; i < \lambda}$.

Let cl$(X) = \bigcup_{n \in \NN} X_n$ be the \emph{closure} of $X$. We say a set $Y\subseteq V(G)$ is \emph{closed} if cl$(Y)=Y$, and it is clear that cl$(X)$ is closed for every $X \subseteq V$. Since there are only~$\lambda$ many trees~$T_i$, and~${\lceil x \rceil_i}$ is finite for each~$i$, it follows that whenever $X$ is closed and $Y \supseteq X$ is such that $|Y \setminus X| \leq \lambda$ then there is a closed set $Z \supseteq Y$ with $|Z \setminus X| \leq \lambda$.

Now let $(v_i \colon i < \kappa)$ be a well-ordering of $V(G)$ of length $\kappa=|V(G)|$ and define an increasing sequence of closed sets $(V_i \colon i < \kappa)$ by $V_0 =\emptyset$, $V_{i+1} =$ cl$\{V_i \cup \{\min_j \{v_j \colon v_j \not\in V_i \} \}\}$ for each~${i < \kappa}$, and $V_i = \bigcup_{j<i} V_j$ for $i < \kappa$ a limit. In particular, we have $\bigcup_{i<\kappa} V_i = V(G)$ and $|V_{i+1} \setminus V_i| \leq \lambda$ for each $i < \kappa$. Let us well-order each set $V_{i+1} \setminus V_i$ arbitrarily, and concatenate these orderings to form a well-order~$<^*$ of~$V$. We claim that this well-ordering of order type~$|V(G)|$ witnesses col$(G) \leq \lambda^+$. Indeed, let $v\in V$ be arbitrary. There is a unique $i$ such that~${v \in V_{i+1} \setminus V_i}$, and hence every `backwards edge' (with respect to~$<^*$) from $v$ has both endpoints in $V_{i+1}$. We will show that there at at most $\lambda$ many such edges.

Firstly, since $|V_{i+1} \setminus V_i| \leq \lambda$, there are at most $\lambda\cdot \lambda=\lambda$ many edges between $V_{i+1} \setminus V_i$ and $v$. Furthermore, suppose $e = (x,v)$ is an edge between $V_i$ and $v$. There is some $j$ such that $e \in E(T_j)$ and, since $V_i$ is closed under the tree-order generated by any~$T_{j}$ and $v \not\in V_i$, it follows that $x \leq_j v$. However, there is a unique edge $(x,v) \in E(T_j)$ such that $x \leq_j v$. It follows that there are at most $\lambda$ many edges between $V_i$ and $v$
\end{proof}

We remark that only the backwards implication used that $\lambda$ is infinite.

\begin{cor}
Every graph has a good well-ordering of order-type~$|V(G)|$.
\qed
\end{cor}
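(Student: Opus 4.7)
The plan is to obtain the corollary as a byproduct of the construction used in the proof of Theorem~\ref{t:EHcover}.

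First I would reduce to the case where $G$ is connected: since $\col(G)$ equals the supremum of the colouring numbers of the connected components of $G$, concatenating good well-orders of each component $C$ (each of order type $|V(C)|$) in any order produces a good well-order of $G$ of the desired order type $|V(G)|$. For finite $\col(G)$ the corollary is classical: $G$ is degenerate and a standard greedy elimination argument starting from any fixed good well-order yields a good well-order of order type $|V(G)|$, so I would dispose of this case first.

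For $G$ connected with infinite $\col(G) = \mu$, I would choose an infinite cardinal $\lambda$ with $\mu \leq \lambda^+$ minimally; in the main case $\mu$ is a successor and $\lambda$ is its predecessor. By the forward direction of Theorem~\ref{t:EHcover}, $G$ admits a $\lambda$-covering. Then the construction in the proof of the backward direction of Theorem~\ref{t:EHcover} produces a well-order $<^*$ of $V(G)$ as a concatenation of $|V(G)|$ many pieces $V_{i+1}\setminus V_i$, each of cardinality at most $\lambda$, and this well-order witnesses $\col(G) \leq \lambda^+ = \mu$, hence is good. When $\lambda < |V(G)|$, the ordinal-arithmetic identity $\lambda \cdot |V(G)| = |V(G)|$ ensures the concatenation has total order type exactly $|V(G)|$. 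When instead $\lambda \geq |V(G)|$, any well-order of $V(G)$ of order type $|V(G)|$ is already good, since all backward-edge sets then have cardinality at most $|V(G)| \leq \lambda < \mu$.

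The main obstacle is the edge case where $\col(G)$ is an infinite limit cardinal, since then no $\lambda$ satisfies $\lambda^+ = \mu$ and the construction from Theorem~\ref{t:EHcover} with the next available $\lambda$ only witnesses $\col(G) \leq \lambda^+ > \mu$, which is not good. Here I would work directly: take any good well-order $<^\circ$ of $V(G)$ (of possibly large order type) and mimic the closure construction from the proof of Theorem~\ref{t:EHcover}, but using the backward-neighbourhood operation with respect to $<^\circ$ in place of tree-ancestors. Because $\mu$ is a limit, a countable iteration of adding backward-neighbourhoods of fewer than $\mu$ many vertices stays below $\mu$ in cardinality, so the closed pieces $V_{i+1} \setminus V_i$ can be kept of cardinality $< \mu$; the same ordinal arithmetic as above then delivers the desired order type $|V(G)|$.
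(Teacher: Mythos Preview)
Your overall plan---deriving the corollary from the construction in the proof of Theorem~\ref{t:EHcover}---is exactly what the paper intends (the corollary is stated there without further argument), and your successor-cardinal case is correct. The genuine gap is in your limit-cardinal case. You assert that because $\mu$ is a limit, a countable iteration of adding $<^\circ$-backward neighbourhoods to a set of fewer than $\mu$ vertices stays below $\mu$ in cardinality. But this amounts to the statement that a sum of fewer than $\mu$ cardinals, each below $\mu$, is again below $\mu$---that is \emph{regularity} of $\mu$, not the limit property, and $\col{G}$ can perfectly well be a singular limit cardinal even for connected $G$. Concretely, let $G$ consist of a vertex $x$ joined by one edge to a distinguished vertex $y_n$ of a copy of $K_{\aleph_n}$ for each $n<\omega$. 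Then $G$ is connected with $\col{G}=\aleph_\omega$. For the good order $<^\circ$ that lists each clique first (with $y_n$ last within its clique) and $x$ at the very end, the backward neighbours of $x$ are the countably many $y_n$, and \emph{their} backward neighbours already exhaust all of $V(G)$. So the closure of a single vertex has size $\aleph_\omega=\mu$, your pieces $V_{i+1}\setminus V_i$ need not have cardinality $<\mu$, and an arbitrary well-order on such a piece need not keep backward edge-counts below $\mu$; the resulting $<^*$ is then not guaranteed to be good.

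A minor slip: in the case $\lambda\ge|V(G)|$ you claim backward-edge sets have size at most $|V(G)|$. The paper allows parallel edges, so this is false as written (two vertices may share $\lambda$ edges). The conclusion survives once you use that all edge-multiplicities are $\le\lambda$---a consequence of the existence of any good order---yielding the bound $|V(G)|\cdot\lambda=\lambda<\mu$.
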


\section{A Cantor-Bernstein theorem for spanning trees in infinite graphs}\label{s:graphs}

\begin{thm}\label{thm:Gdecomposition}
Let $\lambda$ be a cardinal (finite or infinite) and let $G$ be a graph with col$(G) \leq \lambda^+$ which admits $\lambda$-packing. Then $G$ admits a $\lambda$-decomposition.
\end{thm}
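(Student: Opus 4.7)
The plan is to proceed by transfinite recursion along a good well-ordering of $V(G)$, modifying the given packing in a controlled way at each step. Fix a good well-ordering $<^* = (v_\alpha : \alpha < \kappa)$ of $V(G)$ witnessing $\col(G) \leq \lambda^+$, so the backward-edge sets $E_\alpha$ have cardinality at most $\lambda$ and $\{E_\alpha : \alpha < \kappa\}$ partitions $E(G)$. Let $(P_i : i < \lambda)$ be the given $\lambda$-packing, and write $E[V_\alpha]$ for the set of edges with both endpoints in $V_\alpha = \{v_\beta : \beta < \alpha\}$.

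I would build an evolving sequence $(P_i^\alpha : i < \lambda)_{\alpha \leq \kappa}$ of $\lambda$-packings of $G$, starting with $P_i^0 = P_i$, maintaining the invariant that the restrictions $\{P_i^\alpha \cap E[V_\alpha] : i < \lambda\}$ form a $\lambda$-decomposition of $E[V_\alpha]$, and are stable in $\alpha$: once an edge is assigned to a tree it stays there. Granting this invariant to $\alpha = \kappa$, the final packing $(P_i^\kappa)$ is itself the desired $\lambda$-decomposition, since $E[V_\kappa] = E(G)$.

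At a successor step $\alpha \to \alpha+1$, each edge $e \in E_\alpha$ already in some $P_i^\alpha$ is committed to that tree. For each ``extra'' $e \in E_\alpha \setminus \bigcup_i P_i^\alpha$, the plan is to perform a swap inside some $P_i^\alpha$, adding $e$ and removing an edge $f$ on the resulting fundamental cycle with $f \notin E[V_\alpha]$; since the last edge of that cycle is incident to $v_\alpha \notin V_\alpha$, such a candidate $f$ always exists for a single extra. The main obstacle is processing all extras of $E_\alpha$ simultaneously, because a removed $f$ may itself lie in $E_\alpha$, triggering a cascade. I would resolve this by recasting the simultaneous distribution of $E_\alpha$ as a matroid-partition problem: for each $i$, equip $E_\alpha$ with the matroid whose independent sets are the $S \subseteq E_\alpha$ that extend the forest $P_i^\alpha \cap E[V_\alpha]$ to a larger forest. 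Since $v_\alpha \notin V_\alpha$, every nonempty $S$ has rank at least one in every such matroid, so the rank condition $\sum_i r_i(F) \geq \lambda \geq |F|$ holds for every $F \subseteq E_\alpha$, and a matroid-partition argument (in a suitable infinitary form when $\lambda$ is uncountable) yields a simultaneous assignment. Executing the corresponding swaps in the packing then produces $(P_i^{\alpha+1})$.

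The limit step is the most delicate: at a limit $\alpha$, the committed restrictions $F_i^\alpha := \bigcup_{\beta < \alpha} P_i^\beta \cap E[V_\beta]$ form a spanning subforest of $G[V_\alpha]$, and I must extend these to $\lambda$ edge-disjoint spanning trees of $G$. For this I would carry as an auxiliary invariant that each contraction $G / F_i^\alpha$ still has edge-connectivity at least $\lambda$ (preserved through successor steps because the swaps only alter the non-committed region), so that Theorem~\ref{thm:laviolette} yields the desired packing extensions; arranging their edge-disjointness simultaneously is the technical heart of the limit step. In the finite-$\lambda$ case, Theorem~\ref{thm:laviolette} is not available and infinitary matroid-partition is unnecessary, but the finite forms of the same arguments go through directly; this is what allows Theorem~\ref{thm:Gdecomposition} to hold uniformly for all cardinals, even though the full Cantor--Bernstein analogue in Theorem~\ref{thm:characterisation} is open in the finite regime.
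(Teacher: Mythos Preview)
Your plan has a genuine gap at the limit step, and the claimed fix for finite~$\lambda$ does not work.

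At a limit ordinal~$\alpha$ you retain only the committed forests~$F_i^\alpha$ and must re-manufacture an entire $\lambda$-packing $(P_i^\alpha)$ of~$G$ extending them, pairwise edge-disjoint. You observe that each $G/F_i^\alpha$ is still $\lambda$-edge-connected and cite Theorem~\ref{thm:laviolette}, but that theorem gives a packing of a \emph{single} $G/F_i^\alpha$; it does not give simultaneous packings over all $i<\lambda$ that are mutually edge-disjoint in~$G$. You explicitly flag this as ``the technical heart'' but offer no argument, and I do not see one: the obvious approaches (greedy, or iterating Laviolette) do not coordinate across the different~$F_i^\alpha$. Worse, for finite~$\lambda$ your fallback is simply false. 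Theorem~\ref{thm:laviolette} \emph{fails} for finite~$\lambda$ --- as the paper remarks immediately after its statement, there are infinite graphs of arbitrarily large finite edge-connectivity with no two edge-disjoint spanning trees --- so there is no ``finite form of the same argument'' to invoke at limit steps. Your successor step is also looser than it looks: the matroid partition hands you sets $S_i\subseteq E_\alpha$ with $(P_i^\alpha\cap E[V_\alpha])\cup S_i$ a forest, but converting this into actual swaps in the full spanning trees~$P_i^\alpha$ --- removing one non-committed edge per added edge while keeping all $\lambda$ trees spanning and mutually disjoint --- is not automatic, and the constraint that already-committed edges of~$E_\alpha$ stay in their tree is not handled by the unconstrained rank inequality.

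The paper sidesteps all of this by processing \emph{edges} rather than vertices. It concatenates well-orders of the~$E_\alpha$ into a single well-order~$\prec$ of~$E(G)$ and runs a transfinite sequence of single swaps: at step~$t$, if~$e_t$ is uncovered, choose a tree~$T_k$ containing no $\prec$-earlier edge from the same~$E_\alpha$ (possible since $|E_\alpha|\leq\lambda$), add~$e_t$, and delete the $\prec$-maximal edge~$f$ of the resulting cycle; one checks $e_t\prec f$, so once an edge is inserted it is never removed. At a limit one takes the eventually-stable edges and shows each~$T_k$ still meets every bond~$F$ via the observation that $\min_\prec(T_k\cap F)$ is $\prec$-non-increasing in~$t$, hence eventually constant. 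No Laviolette, no matroid union, and the argument is uniform in finite and infinite~$\lambda$.
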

\begin{proof}
Let $(v_i \colon i < \kappa)$ be a good well-ordering of~$V(G)$. For each $i < \kappa$ let $E_i$ be the set $\{ (v_j,v_i) \in E(G) \colon j < i\}$ of `backwards edges' in this ordering at $v_i$. Then $(E_i \colon i < \kappa)$ is a partition of $E(G)$ and $|E_i| \leq \lambda$ for each $i < \kappa$. Let us well-order each of the sets $E_i$ arbitrarily in order type $|E_i|$ and concatenate these orderings to form a well-order $\prec$ of $E$. 

By assumption, there exists a family $(T_i \colon i <\lambda)$ of $\lambda$ many edge-disjoint spanning trees of~$G$. If $\bigcup_{i < \lambda} T_i = E(G)$, then $(T_i \colon i <\lambda)$ is a $\lambda$-decomposition. Our aim will be to exchange a yet uncovered edge $f \in E(G) \setminus \bigcup_{i < \lambda} T_i$ with some later edge $e \succ f$ from some $T_i$ such that at each stage in our process we maintain the property that $(T_i \colon i <\lambda)$ is a $\lambda$-packing. By an appropriate book-keeping procedure, we guarantee that each edge is eventually covered.

Let us initialise by setting $T_i(0) = T_i$ for each $i < \lambda$. Suppose that we have already constructed a $\lambda$-packing $\mathcal{T}_t = (T_i(t) \colon i < \lambda)$ where $t< \kappa$. In step $t$ we consider $e_t$. If $e_t \in \bigcup_{i<\lambda} E(T_i(t))$, then we set $T_i(t+1) = T_i(t)$ for each $i < \lambda$. Otherwise, $e_t \not\in \bigcup_{i<\lambda} T_i(t)$. Then $e_t \in E_i$ for some $i$ and by construction there are fewer than $\lambda$ many edges $e \in E_i$ such that $e \prec e_t$, and hence there is some $k < \lambda$ such that $T_k(t)$ contains no edges $e \in E_i$ with~${e \prec e_t}$. Since $T_k(t)$ is a spanning tree, there is a unique cycle $C \subseteq T_k(t) + e_t$. Since $C$ is finite, it contains a $\prec$-maximal edge $f$. Moreover, since $T_k(t)$ contains no edges $e \in E_i$ with $e \prec e_t$ it follows that $f \neq e_t$: if $j$ is maximal such that $C \cap E_{j} \neq \emptyset$ then $|C \cap E_{j}| = 2$, since~$C$ is a cycle. Then, if $j = i$ it follows that $e_t \prec f$ by our choice of $T_k(t)$ and if $j > i$ then clearly $e_t \prec f$ since all of $E_i$ precedes $E_j$.

Now let $T_k(t+1) = T_k(t) - f + e_t$, which is again a spanning tree, and $T_i(t+1) := T_i(t)$ for all $k \neq i < \lambda$. Finally for each limit ordinal $\tau < \kappa$ we let
\[
T_i(\tau) = \{ e \colon \text{ there exists } t_0 < \tau \text{ such that } e \in T_i(t) \text{ for all } t_0 < t < \tau \}
\]
We claim that for every $t \leq \kappa$ the family $\mathcal{T}_t$ is indeed a $\lambda$-packing. Since this property is clearly preserved at successor steps, it remains to check that it holds at limit steps. 

As it is clear that if each  $\mathcal{T}_t$ is a family of edge-disjoint subgraphs for $t < \tau$, then $\mathcal{T}_\tau$ is a family of edge-disjoint subgraphs, it is sufficient to show that each $T_i(\tau)$ is in fact a spanning tree. That each $T_i(\tau)$ is acyclic is clear, as any finite cycle would have to appear at some successor step. To see that $T_i(\tau)$ is connected and spanning, it suffices to show that it contains an edge from each bond of $G$. 

Given a bond $F \subset E(G)$  let us consider the set of edges $F_i(t) := E(T_i(t)) \cap F$. We claim that the sequence $f_i(t) := \min_\prec F_i(t)$ is $\prec$-non-increasing in $t$. Indeed, suppose we delete the $\prec$-minimal edge $f$ of $F_i(t)$ from $T_i(t) $ at step $t$. Note that by the construction there is a cycle $C$ with $\prec$-maximal edge $f$ such that $C-f \subset T_i(t+1)$. Then $C \cap F$ is non-empty because it contains $f$ and therefore, since $\left|C\cap F\right|$ must be even, there is some $e\neq f$ in~${C\cap F}$. It follows from the $\prec$-maximality of $f$ in $C$ that $e \prec f$. Furthermore, $e\in F_i(t+1)$ since $C-f \subset T_i(t+1)$, from which $f_i(t+1)\prec f_i(t)$ follows. Hence for each bond $F$ and each limit ordinal $\tau$, the sequence $(f_i(t) \colon t < \tau)$ is constant after some $t_0<\tau$, and therefore $f_i(t_0) \in F \cap T_i(\tau)$. 

It remains to verify that $\mathcal{T}_\kappa$ is a $\lambda$-decomposition. Since it is a $\lambda$-packing by the above, it suffices to show that $\bigcup_{i < \lambda} E(T_i(\kappa)) = E(G)$. However for each $t < \lambda$ we have $e_t \in E(T_k(t+1))$ for some $k$ by construction. Furthermore, at any later stage $s$ we only ever remove an edge $f$ with $e_t \prec e_s \prec f$. It follows that $e_t \in E(T_k(s))$ for all $s > t$ and hence $e_t \in E(T_k(\kappa))$. 
\end{proof}

Theorem \ref{thm:characterisation} then follows from Theorems \ref{thm:Gdecomposition} and \ref{t:EHcover}. We conclude this section by observing that the effect of a graph having $\lambda$-decompositions for different $\lambda$'s can get arbitrarily pronounced:

\begin{prop}
\label{prop_example}
For every infinite cardinal $\kappa$ there is a graph that admits a $\lambda$-decomposition for any choice of $\lambda$ with $2 \leq \lambda \leq \kappa$. 
\end{prop}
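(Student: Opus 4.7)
The plan is to construct, for each infinite cardinal $\kappa$, a single graph $G=G_\kappa$ admitting a $\lambda$-decomposition for every $\lambda \in [2,\kappa]$. The analysis splits naturally according to whether $\lambda$ is infinite or finite.

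For infinite $\aleph_0 \leq \lambda \leq \kappa$, Theorem~\ref{thm:characterisation} combined with Theorems~\ref{thm:laviolette} and~\ref{t:EHcover} reduces the problem to a single pair of structural conditions on $G$ (required uniformly for all such $\lambda$): $G$ should be connected with edge-connectivity at least $\kappa$ (giving $\lambda$-packings for every infinite $\lambda \leq \kappa$) and with colouring number at most $\aleph_1$ (so that $\aleph_1 \leq \lambda^+$ gives $\lambda$-coverings already for $\lambda = \aleph_0$, and \emph{a fortiori} for all larger infinite $\lambda$). For finite $\lambda \in [2,\aleph_0)$, Theorem~\ref{thm:characterisation} is unavailable, so the decompositions must be exhibited by hand.

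For the construction, I would take $V = \kappa$ in its natural well-order and build $G$ by transfinite recursion: for each $\beta < \kappa$, assign a countable backward set $B_\beta \subseteq \beta$, and declare $v_\alpha v_\beta$ an edge exactly when $\alpha \in B_\beta$. The countability of the $B_\beta$'s then gives $\operatorname{col}(G) \leq \aleph_1$ immediately. The $B_\beta$'s are to be chosen so that (i)~every $\alpha < \kappa$ lies in $\kappa$-many $B_\beta$'s for $\beta > \alpha$, with the choices spread across $\beta$'s in a way that every cut is crossed by at least $\kappa$ edges, and (ii)~$|E(G[F])| \leq 2(|F|-1)$ for every finite $F \subseteq V$, the Nash--Williams-type necessary condition for a 2-decomposition (which is the strongest such constraint among all finite $\lambda \geq 2$). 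Given (ii) and the abundance of long edges guaranteed by (i), the finite $\lambda$-decompositions can then be produced directly by a greedy exhaustion argument analogous to the successor step in the proof of Theorem~\ref{thm:Gdecomposition}.

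The main obstacle is the inherent tension between (i) and (ii): edge-connectivity $\geq \kappa$ pushes the $B_\beta$'s to have dense long-range overlap with past vertices, while the local sparsity (ii) forbids too many backward edges from landing in any finite window. The resolution is a careful book-keeping in the recursion, distributing the ``backward tokens'' broadly across $\beta$'s (so that no finite $F$ becomes saturated) while simultaneously guaranteeing, for every candidate cut, that sufficiently many future backward assignments cross it. A secondary technical point is that once a suitable $G$ is in hand, verifying the finite $\lambda$-decompositions requires a separate (but elementary) argument since Theorem~\ref{thm:characterisation} does not cover finite $\lambda$; this is exactly the gap discussed in Section~\ref{s:open}.
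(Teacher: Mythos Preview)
Your plan overcomplicates matters because you overlook that Theorem~\ref{thm:Gdecomposition} already applies to finite $\lambda$: it only requires $\operatorname{col}(G)\leq\lambda^+$ together with a $\lambda$-packing. So the right target is not merely $\operatorname{col}(G)\leq\aleph_1$ but the much sharper $\operatorname{col}(G)\leq 3$; once that is secured, \emph{every} $\lambda\geq 2$ satisfies $\operatorname{col}(G)\leq\lambda^+$, and a single application of Theorem~\ref{thm:Gdecomposition} (plus Theorem~\ref{thm:laviolette} for the $\kappa$-packing, restricted to $\lambda$ trees) finishes the job uniformly. There is no need to treat finite and infinite $\lambda$ separately, no need for the Nash--Williams condition, and no need for any ad hoc greedy argument.

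The paper achieves $\operatorname{col}(G)=3$ with an essentially trivial construction: start from $K_2$ and at each stage add, for every existing pair $u,v$, $\kappa$ many new internally disjoint $u$--$v$ paths of length two. Every newly created vertex has exactly two backward neighbours, so $\operatorname{col}(G)=3$ is immediate, and edge-connectivity $\kappa$ is clear. Your proposed construction, by contrast, aims only for countable backward sets $B_\beta$, which yields at best $\operatorname{col}(G)\leq\aleph_1$; you then try to recover the finite case via the Nash--Williams density bound and an unspecified exhaustion argument. But (a)~the ``tension'' you identify between high edge-connectivity and local sparsity is genuine and you have not actually resolved it, and (b)~more seriously, deducing finite $\lambda$-decompositions from the Nash--Williams condition rather than from a colouring-number bound is precisely the route the paper reports being unable to carry out in Section~\ref{s:open}. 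So your plan for finite $\lambda$ either collapses back into Theorem~\ref{thm:Gdecomposition} (once you notice $\operatorname{col}(G)\leq 3$ suffices) or else tacitly assumes the solution to an open problem.
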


\begin{proof}[Construction]
We construct the desired graph $G$ as an increasing union of graphs $G_n=(V_n,E_n)$ by recursion on $n \in \NN$ as follows.

Let $G_0=K_2$ be the complete graph on two vertices. We form $G_{n+1}$ by adding $\kappa$ many new $u-v$ paths of length two to $G_n$ for every $u\neq v \in V_n$, internally disjoint from each other and from $V_n$. Finally, we set $G:= \bigcup_{n\in \NN} G_n$ which by construction has (edge-)connectivity~$\kappa$. If we well-order each $V_{n+1}\setminus V_n$ arbitrarily and concatenate these orders, we obtain a well-ordering witnessing $\col{G}=3$, as by construction, every newly added vertex in step $n$ has degree two. Since $ \kappa$ was infinite, it follows from Theorem~\ref{thm:laviolette} that $G$ has a $\kappa$-packing, and hence a $\lambda$-packing for all $\lambda \leq \kappa$. Therefore, the assertion of the proposition follows from Theorem~\ref{thm:Gdecomposition}.
%
\end{proof}

\section{An open problem}\label{s:open}

It remains an interesting question whether the assertion of our main theorem also holds for finite $\lambda$. For finite graphs, a simple counting argument (every spanning tree has precisely $|G|-1$ edges) shows that Theorem~\ref{thm:characterisation} holds when both the graph and $\lambda$ are finite. Hence, the question remains what happens for infinite graphs and finite~$\lambda$. We note that our main technical result, Theorem~\ref{thm:Gdecomposition}, did not require that~$\lambda$ is infinite. However, in order to deduce Theorem \ref{thm:characterisation} from it we needed to apply Theorem~\ref{t:EHcover}, which only holds for infinite~$\lambda$. When~$\lambda$ is finite, only the following, slightly weaker version of Theorem \ref{t:EHcover} holds, which is best possible as can be seen in the case of complete graphs.

\begin{thm}[{Erd\H{o}s and Hajnal, \cite{erdHos1967decomposition}*{Theorem~11}}]
\label{thm:covering implies colering}
If $G$ is a graph (finite or infinite) with a $k$-covering for some $k \in \NN$, then col$(G) \leq 2k$.
\end{thm}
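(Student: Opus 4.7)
The plan is to reduce Theorem~\ref{thm:covering implies colering} to a finite degeneracy bound and then construct the desired well-ordering. First I would establish the key edge-count inequality: for any finite $W \subseteq V(G)$, each restriction $T_j[W]$ is a sub-forest of $T_j$ and hence has at most $|W|-1$ edges, so
\[
|E(G[W])| \;\leq\; \sum_{j=1}^k |E(T_j[W])| \;\leq\; k(|W|-1).
\]
Consequently, every finite induced subgraph of $G$ has average degree strictly less than $2k$ and therefore contains a vertex of degree at most $2k-1$.

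For finite $G$ this immediately yields $\col{G}\leq 2k$ via the standard degeneracy peeling: iteratively select a vertex of degree at most $2k-1$ in the currently remaining subgraph, place it at the end of the well-ordering, and recurse. For infinite $G$ the construction is more delicate, since naive peeling does not transfinitely extend. My plan is to mimic the closure strategy from the proof of Theorem~\ref{t:EHcover}: rooting each $T_j$ at a common vertex $r$, I would build by transfinite recursion a tower $V_0 \subsetneq V_1 \subsetneq \cdots$ of ancestor-closed subsets of $V(G)$ whose union is $V(G)$, with each layer $V_{\alpha+1}\setminus V_\alpha$ being the (countable) closure of a single newly chosen vertex over $V_\alpha$. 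The crucial property of such a closed $V_\alpha$, inherited from the tree structure, is that every vertex outside it sends at most $k$ edges into $V_\alpha$, one per tree, since $V_\alpha$ is ancestor-closed in each $T_j$.

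The main obstacle is then the within-layer ordering: a graph covered by $k$ spanning forests can itself have colouring number up to $2k$ (witness $K_{2k}$), so concatenating crude well-orderings of the successive layers would give only the weaker bound $\col{G}\leq 3k-1$. To close the gap, one must exploit the trade-off that for each $v \in V_{\alpha+1}\setminus V_\alpha$ the $k$ tree-parents of $v$ split into $e_\alpha(v)$ parents lying in $V_\alpha$ (contributing to the external back-degree) and $k-e_\alpha(v)$ parents lying inside the layer. I expect a DFS-style traversal interleaving the $k$ trees to be the technically hardest ingredient here, ensuring that the sum of external and internal back-degrees at every vertex stays bounded by $2k-1$.
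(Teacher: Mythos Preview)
The paper does not prove Theorem~\ref{thm:covering implies colering}; it is simply quoted from Erd\H{o}s--Hajnal, so there is no argument in the paper to compare your proposal against.

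As a plan, what you write is correct up to and including the reduction to the within-layer problem: the edge-count $|E(G[W])|\le k(|W|-1)$, the finite case via degeneracy peeling, the transfinite tower of ancestor-closed sets $V_\alpha$, and the bound of $k$ on the number of edges from any $v\in V_{\alpha+1}\setminus V_\alpha$ into $V_\alpha$ are all fine and mirror the paper's proof of Theorem~\ref{t:EHcover}.

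The gap is exactly where you locate it yourself. You do not carry out the within-layer ordering; you only say you ``expect a DFS-style traversal interleaving the $k$ trees'' to work, without specifying what that traversal is or why it yields back-degree at most $2k-1$. Note too that your trade-off observation---$e_\alpha(v)$ tree-parents in $V_\alpha$ versus $k-e_\alpha(v)$ in the layer---controls only the $k$ parent-edges of $v$; under any ordering of the layer the back-edges of $v$ may well include edges on which $v$ is the \emph{parent} side in the covering tree, so this trade-off alone does not bound the total back-degree. Worse, once you contract $V_\alpha$ to a single root you are facing precisely the original problem for a countable graph equipped with a $k$-covering by rooted trees, so the layering has merely reduced the general case to the countable case without touching the actual difficulty. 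A complete argument still owes a construction for that countable case.
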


The following is then a consequence of Theorems \ref{thm:Gdecomposition} and \ref{thm:covering implies colering}.

\begin{cor}
For every $k\geq 1$, every graph with a $k$-covering and a $(2k-1)$-packing has a $(2k-1)$-decomposition.
\end{cor}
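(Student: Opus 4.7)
The plan is simply to chain the two theorems cited immediately before the corollary. Let $G$ be a graph admitting a $k$-covering together with a $(2k-1)$-packing. First, invoke Theorem~\ref{thm:covering implies colering} on the $k$-covering to conclude that $\col{G} \leq 2k$.

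Next, observe the arithmetic identity $2k = (2k-1)^+$, valid because $2k-1$ is a finite cardinal and the successor cardinal of a finite cardinal $n$ is just $n+1$. Therefore $\col{G} \leq (2k-1)^+$, which is exactly the colouring-number hypothesis needed to apply Theorem~\ref{thm:Gdecomposition} with $\lambda := 2k-1$.

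Finally, since $G$ admits a $(2k-1)$-packing by assumption, Theorem~\ref{thm:Gdecomposition} (whose proof, as emphasised in the paper, did not require $\lambda$ to be infinite) directly yields a $(2k-1)$-decomposition of $G$.

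There is no real obstacle here; the only thing to be careful about is that one uses the finite-$\lambda$ case of Theorem~\ref{thm:Gdecomposition}, which is available because that theorem is stated and proved uniformly for both finite and infinite cardinals. Everything else is an immediate substitution.
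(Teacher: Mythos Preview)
Your argument is correct and is precisely the intended one: combine Theorem~\ref{thm:covering implies colering} (to get $\col{G}\le 2k=(2k-1)^+$) with Theorem~\ref{thm:Gdecomposition} applied with $\lambda=2k-1$. The paper proves the corollary in exactly this way.
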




Hence, if one were to seek a proof for Theorem~\ref{thm:characterisation} for finite $k=\lambda$, one would need to use the assumption of the existence of a $k$-covering more efficiently than simply relying on the rather weak consequence that col$(G) \leq 2k$. One such possibility might be offered by the following characterisation due to Nash-Williams (where the assertion for infinite graphs follows from the finite version by a straightforward compactness argument):

\begin{thm}[Nash-Williams \cite{NW64}]
\label{t:NWcovering}
For every $k\in \mathbb{N}$, a graph $G$ admits a $k$-covering if and only if for every non-empty finite $U \subseteq V(G)$ the number of edges in $G[U]$ is at most $k(|U|-1)$.
\end{thm}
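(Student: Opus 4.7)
My plan is to address the two directions separately; the substantive content lies entirely in the backward direction. The forward direction is a straightforward counting argument: given a $k$-covering $(T_i \colon i < k)$ and a non-empty finite $U \subseteq V(G)$, the subgraph of $T_i$ consisting of those edges lying inside $G[U]$ is a subforest of the tree $T_i$ and hence has at most $|U|-1$ edges; since the $T_i$ jointly cover $E(G[U])$, summing over $i$ gives $|E(G[U])| \leq k(|U|-1)$.

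For the converse, I would first handle the finite case and then lift by compactness. For finite $G$, this is essentially Nash-Williams' classical arboricity theorem, which I would derive by applying Edmonds' matroid union theorem to $k$ copies of the graphic matroid $M(G)$, whose rank function $r$ satisfies $r(E(G[U])) \leq |U|-1$. The matroid union theorem yields that $E(G)$ decomposes into $k$ forests iff $|F| \leq k \cdot r(F)$ for every $F \subseteq E(G)$, which is readily seen to be equivalent to the density hypothesis. Since a $k$-covering by spanning trees forces $G$ to be connected, each forest in the decomposition can then be extended to a spanning tree by absorbing edges from the other forests.

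For infinite $G$, I would argue by a de Bruijn-Erd\H{o}s-style compactness. Let $X = \{1,\ldots,k\}^{E(G)}$ carry the product topology, making it compact by Tychonoff; view its points as $k$-colourings of $E(G)$. For every (necessarily finite) cycle $C$ of $G$ and every colour $j < k$, the set of colourings in which $C$ is not monochromatic of colour $j$ is clopen, and a colouring decomposes $E(G)$ into $k$ forests iff every such condition is satisfied. Given any finite collection of such conditions, the union of the involved cycles lies in a single finite subgraph $H \subseteq G$; the density hypothesis passes to $H$, so the finite case furnishes a valid colouring of $E(H)$ which, extended arbitrarily to $E(G)$, sits in the corresponding intersection. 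Compactness then produces a global decomposition of $E(G)$ into $k$ forests, and assuming $G$ is connected, each such forest extends to a spanning tree via Zorn's lemma.

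The principal obstacle will be the finite Nash-Williams theorem itself; the compactness lift, as the paper already notes, is routine once the finite case is in hand. A smaller subtlety worth flagging is that the stated biconditional strictly requires $G$ to be connected, since the density condition alone does not imply the existence of even a single spanning tree.
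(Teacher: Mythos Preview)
The paper does not supply its own proof of this theorem; it merely cites Nash-Williams \cite{NW64} for the finite case and remarks parenthetically that the infinite case follows by a straightforward compactness argument. Your proposal is correct and fleshes out exactly this outline: the Tychonoff argument you give is the intended compactness lift, and the matroid-union derivation of the finite case is one of the standard modern proofs of Nash-Williams' arboricity theorem. Your closing observation that the biconditional, as literally stated, tacitly requires $G$ to be connected is also accurate.
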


However, we did not succeed in proving a theorem in the vein of Theorem~\ref{thm:Gdecomposition} using Nash-Williams's condition.

Finally, we remark that in order to prove the assertion of Theorem~\ref{thm:characterisation} for finite $\lambda=k$, it suffices to consider countable graphs: Indeed, to see that the general case follows from the countable case, consider some uncountable graph $G$ with a $k$-packing  $\{T_1,\ldots,T_k\}$ and a $k$-covering $\{T_{k+1},\ldots,T_{2k}\}$. Starting with $W_0 = \emptyset$, by greedily adding finite paths from the different trees in turn for $\omega$ many substeps, we find an increasing, continuous collection $(W_i \colon i < |G|)$ of subsets of $V$ with $\bigcup_i W_i = V$ such that $W_{i+1} \setminus W_i$ is countable, and each $T_j[W_i]$ is an induced subtree of $T_j$ for all $j \in [2k]$ and $i < |G|$. Then each minor $G_i=G[W_{i+1}]/G[W_i]$   has a $k$-packing and $k$-covering given by the trees $T_j[W_{i+1}]/T_j[W_i]$. Applying the countable assertion to each $G_i$ yields a $k$-decomposition $\{S_1(i),\ldots,S_k(i)\}$ of~$G_i$. Clearly, the subtrees~$S_j$ of~$G$ for $j \in [k]$ given by $E(S_j)  = \bigcup_{i} E(S_j(i))$ are as desired.

\bibliographystyle{unsrtnat}
\bibliography{references}

\end{document}